\newcommand{\inclu}{\hookrightarrow}
\newcommand{\cXn}{\cX^{[n]}}
\newcommand{\Hn}[1]{{#1}^{[n]}}
\newcommand{\Hilb}[2]{{#1}^{[#2]}}
\newcommand{\cLn}{\cL^{[n]}}
\newcommand{\Gott}{G\"{o}ttsche}
\newcommand{\fourtop}{L^2, LK, c_1(S)^2, c_2(S)}
\newcommand{\fourtopand}{$L^2$, $LK$, $c_1(S)^2$ and $c_2(S)$}
\newcommand{\ua}{\underline{\alpha}}
\newcommand{\uan}{(\alpha_1,\alpha_2,\ldots, \alpha_{l(\ua)})}
\newcommand{\dd}[2]{\frac{\partial #1}{\partial #2} }
\newcommand{\ddy}[1]{\frac{\partial #1}{\partial y} }
\newcommand{\Cxy}{\mathbb{C}\{x,y\}}
\newcommand{\codim}{\text{codim}}
\theoremstyle{plain}
\newtheorem{prop}{Proposition}[section]
\newtheorem{theo}[prop]{Theorem}
\newtheorem{lemm}[prop]{Lemma}
\newtheorem{coro}[prop]{Corollary}
\theoremstyle{definition}
\newtheorem{defn}{Definition}[section]
\newtheorem{eg}{Example}[section]
\theoremstyle{remark}
\newtheorem*{rem}{Remark}
\newtheorem*{notation}{Notation}
\newtheorem*{claim}{Claim}
\newcommand{\CC}{\mathbb{C}}
\newcommand{\NN}{\mathbb{N}}
\newcommand{\PP}{\mathbb{P}}
\newcommand{\QQ}{\mathbb{Q}}
\def\cC{{\mathcal C}}
\def\cK{{\mathcal K}}
\def\cL{{\mathcal L}}
\def\cO{{\mathcal O}}
\def\cX{{\mathcal X}}
\def\fm{\mathfrak{m}}
\let\oldTitle\title
\renewcommand{\title}[1]{\newcommand{\myTitle}{#1}\oldTitle{#1}} 
\title{Universal polynomials for singular curves on surfaces}
\numberwithin{equation}{section}
\begin{document}

\author{Jun Li}
\address{Department of Mathematics, Stanford University,
California, CA 94305, USA}
\email{jli@math.stanford.edu}

\author{Yu-jong Tzeng}
\address{Department of Mathematics, Harvard University,
Cambridge, MA 02138, USA} 
\email{ytzeng@math.harvard.edu}

\begin{abstract}  
Let $S$ be a complex smooth projective surface and $L$ be a line bundle on $S$. 
For any given collection of isolated topological or analytic singularity types, we show the  number of curves in the linear system $|L|$ with prescribed singularities is a universal polynomial of Chern numbers of $L$ and $S$, assuming $L$ is sufficiently ample. 
Moreover, we define a generating series whose coefficients are these universal polynomials and discuss its properties.  This work is a generalization of  \Gott's conjecture to curves with higher singularities.  
  
\end{abstract}
\maketitle

\section{Introduction}
For a pair of a smooth projective surface and a line bundle $(S,L)$, it is a classical problem to find the number of $r$-nodal curves in a generic $r$-dimensional linear subsystem of $|L|$. 
\Gott\ conjectured that for any $r\geq0$, there exists a universal polynomial $T_r$ of degree $r$, such that $T_r(L^2, LK_S, c_1(S)^2, c_2(S))$ equals the number of $r$-nodal curves in a general linear subsystem, provided that $L$ is $(5r-1)$-very ample. 
Moreover, the generating series of $T_r$ has a  multiplicative structure and satisfies the \Gott-Yau-Zaslow formula (\cite{Gott}, \cite{Tz}). 
Recently, \Gott's universality conjecture was proven  by the second named author \cite{Tz} using degeneration methods, and a different proof was given by Kool-Shende-Thomas \cite{KST} using BPS calculus and computation of tautological integrals on Hilbert schemes (see also the approach of Liu \cite{Liu1} \cite{Liu2}). In this paper, we address the question whether a similar phenomenon is true for curves with higher singularities? 

The goal of this article is to generalize \Gott's universality conjecture to curves with arbitrary isolated (analytic or topological) singularities. Consider a collection  of isolated singularity type  $\ua=\uan$. We say a curve $C$ has singularity type $\ua$ if there exists $l(\ua)$ points $x_1$, $x_2$,\ldots $x_{l(\ua)}$, such that the singularity type of $C$ at $x_i$ are exactly $\alpha_i$ and $C$ has no more singular points. 
We prove the following theorem concerning curves with singularity type $\ua$: 
\begin{theo} For every collection of isolated singularity type  $\ua$, there exists a universal polynomial $T_{\ua}(x,y,z,t)$ of degree $l(\ua)$ with the following property: given a smooth projective surface $S$ and an $(N(\ua)+2)$-very ample  line bundle $L$ on $S$, a general $\codim(\ua)$-dimensional sublinear system of $|L|$ contains exactly $T_{\ua}(L^2, LK, c_1(S)^2, c_2(S))$ curves with singularity type $\ua$.  
\end{theo}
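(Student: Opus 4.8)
The plan is to follow the degeneration strategy pioneered in \cite{Tz} for nodal curves, adapted to handle arbitrary isolated singularity types. The starting point is the observation that ``having singularity type $\ua$'' is a condition that can be detected locally: near each $x_i$, the defining equation of $C$ must lie in a certain locally closed subset of the jet space (for analytic types, an orbit of the contact group; for topological types, a union of such orbits), and globally $C$ must be otherwise smooth. The first step is therefore to set up the correct enumerative geometry: for a sufficiently ample $L$ on $S$, the locus in $|L|$ of curves with singularity type $\ua$ has pure codimension $\codim(\ua)$, so intersecting with a general linear subsystem of complementary dimension yields a finite reduced count. Here $(N(\ua)+2)$-very ampleness should be exactly what is needed to guarantee that imposing the singularity conditions at distinct points is transverse and that no unexpected degenerations (worse singularities, or singularities colliding) occur in the general linear section; this is where the integer $N(\ua)$ enters, governing the jet order needed to recognize the types in $\ua$.

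\medskip

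Next I would express this count as a tautological integral over a Hilbert scheme of points, or over an appropriate incidence variety, in the spirit of the Kool--Shende--Thomas reformulation \cite{KST} combined with the degeneration of \cite{Tz}. Concretely, one degenerates the pair $(S,L)$ to a union of simpler pieces (e.g.\ via deformation to the normal cone of a smooth curve, or to a chain of surfaces), and applies the degeneration formula: the singular curves on the central fiber distribute their singular points among the components, and the count factors as a sum over ways of partitioning $\ua$ into sub-collections supported on the various components, with gluing contributions along the intersection curves. The key structural input is that each ``atomic'' contribution --- the number of ways a single singularity of type $\alpha_i$ can sit on a local model, weighted appropriately --- depends only on $\alpha_i$ and on the local geometry (a neighborhood of a point, or of the gluing curve), not on the global surface. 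Iterating the degeneration reduces everything to universal local invariants, and a cobordism / invariance argument (as in \cite{Tz}) shows that the resulting expression, as a function of $(S,L)$, can only depend on the four Chern numbers $L^2$, $LK$, $c_1(S)^2$, $c_2(S)$, and is polynomial of degree $l(\ua)$ in them because each of the $l(\ua)$ singular points contributes one ``degree''.

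\medskip

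The main obstacle I anticipate is controlling the geometry of the singularity loci well enough to make the degeneration formula apply cleanly. For nodes this is classical, but for higher singularities several issues arise: (i) the locus of curves with a given analytic or topological type need not be smooth, and its closure contains curves with more degenerate singularities, so one must argue that these boundary strata do not contribute to the generic count --- this is precisely where the ampleness hypothesis $(N(\ua)+2)$-very ample must be shown to suffice, presumably by a dimension count bounding the codimension of worse strata; (ii) for topological singularity types one must handle a whole equisingular stratum at once, which is a constructible but not locally closed set, so the ``local contribution'' must be defined as an Euler-characteristic-type weighted count rather than a naive point count; (iii) in the degeneration, singular points may try to migrate into the singular locus of the central fiber, and one needs a transversality statement (general linear subsystem, general degeneration) ensuring this happens in the expected codimension. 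Once these local-to-global and transversality statements are in place, the universality and polynomiality follow from the same formal machinery as in the nodal case: a generating-function bookkeeping of the local contributions, combined with the fact that only four independent Chern numbers survive after applying the degeneration relations, together with Ellingsrud--G\"ottsche--Lehn-type arguments on Hilbert schemes to pin down the polynomial structure.
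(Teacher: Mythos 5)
Your plan follows essentially the same route as the paper: encode each singularity type by a punctual subscheme whose length $N(\alpha)$ is governed by the finite-determinacy order, express the count as a Thom--Porteous/Chern-class integral of the tautological bundle $L^{[N(\ua)]}$ over the closure of the corresponding locus in $S^{[N(\ua)]}$, rule out excess contributions by dimension counts under the $(N(\ua)+2)$-very ampleness hypothesis, and then apply the algebraic-cobordism degeneration of Hilbert schemes from \cite{Tz} and \cite{LW} to show the generating series is multiplicative in $L^2$, $LK$, $c_1(S)^2$, $c_2(S)$, so each coefficient is a universal polynomial of degree $l(\ua)$. The one place your sketch overcomplicates the argument is anticipated obstacle (ii): the paper handles topological types simply by letting $S^0(\alpha)$ be the union of the relevant contact orbits and absorbing the dimension of the equisingular stratum into $\codim(\alpha)$, so no Euler-characteristic weighting is required.
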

For instance, there is a universal polynomial which counts curves with one triple point, two $E_8$ singularity, and one $5$-fold point analytic equivalent to $x^5-y^5=0$. 

To characterize the conditions of given singularity type, we study the locus of zero-dimensional closed subschemes of a special shape on the surface. The shape is determined by the singularity type such that if a curve has a prescribed singularity then it must contain a zero-dimensional closed subscheme of the corresponding shape; moreover, the converse is true for generic curves.  In the case of node, the locus is the collection of subschemes isomorphic to $\Cxy/\fm^2$ because a curve $C$ is singular at $p$ if and only if $C$  contains $\cO_p/{\fm_p^2}$. 
This technique was developed by \cite{Gott} and \cite{HP} to show that the enumeration of nodal curves can be achieved by computing certain intersection number on Hilbert schemes of points. 
In this paper, we find a uniform way of defining such correspondence from  isolated singularity types to the punctual Hilbert schemes.
As a result, the number of curves with given singularity types can be expressed again as intersection numbers on Hilbert schemes of points on $S$. 
Then we apply a degeneration argument developed in \cite{Tz} to show the existence of universal polynomials. 

For example, the number of cuspidal curves can be computed in the following steps: 

\begin{enumerate}
\item If a curve $C$ has a cusp at a point $p$, then we can choose local coordinates at $p$ such that $C$ is defined by $y^2-x^3=0$. 
Therefore $C$ contains  a subscheme isomorphic to  $\Cxy/\langle y^2-x^3 \rangle$ supported at $p$. 
But since we want a zero-dimensional subscheme of finite length, we can quotient further by $\fm^4$ because $\fm^4$ will not affect the cuspidal condition. 

\item Define $S^0(\text{cusp})$ to be all closed subschemes isomorphic to \mbox{$\Cxy/\langle y^2-x^3, \fm^4 \rangle$} in $\Hilb{S}{7}$, take $S(\text{cusp})$ to be the closure of $S^0(\text{cusp})$ (with induced reduced structure). It is elementary to see that the dimensions of  $S^0(\text{cusp})$ and $S(\text{cusp})$ are both $5$. 

\item Since the locus of cuspidal curves is of codimension two in $|L|$, our goal is  computing the (finite) number of cuspidal curves in a general linear subsystem $V\cong\PP^2\subset |L|$.  Suppose that  $L$ is sufficiently ample and let $L^{[7]}$ be the tautological bundle of $L$ in $S^{[7]}$, 
the number of cuspidal curves is equal to  the number of points of the locus cut out by the sections in $H^0(L^{[7]})$ induced by $V$ in $S(\text{cusp})$; when the intersection is discrete, this number is represented by $d_{\text{cusp}} (S,L):= \int_{S(\text{cusp})} c_{7-3+1} (L^{[7]})$ (by applying the Thom-Porteous Formula to the locus where three general sections of $L^{[7]}$ are linearly dependent). 

\item The degree of the zero cycle $d_{\text{cusp}} (S,L)$ does not have contribution from nonreduced curves, curves with more than two singular points or with singularity worse than cusp. Because if a curve in $d_{\text{cusp}} (S,L)$ is nonreduced or has more than two singular points, then it  must contain  $\cO_{S,p}/\langle y^2-x^3, {\fm_{S,p}}^4 \rangle \amalg \cO_{S,q}/ \fm_{S,q}^2$ for some points $p\neq q$ in $S$. Similarly a curve in $d_{\text{cusp}} (S,L)$ with singularity worse than cusp must contain $\Cxy/\langle (y^2-x^3)\fm, \fm^4\rangle$. 
Dimension count shows that this is impossible for general $V$ and sufficiently ample $L$. 

\item In the last step, we apply the degeneration technique developed in \cite{Tz}.  We show $d_{\text{cusp}}(S,L)$ only depend on the class of $[S,L]$ in the algebraic cobordism group, which only depends on \fourtopand.  
Hence $d_{\text{cusp}}(S,L)$ is a polynomial in \fourtopand\ and  is exactly the universal polynomial we are looking for. 

\end{enumerate}
 
Our method does not provide a constructive way for  computing the  universal polynomials. However, the multiplicative property of generating series (Theorem \ref{thm:Ai}) imposes strong restrictions on universal polynomials. In the end of Section \ref{sec:3}, we discuss a few known cases and the relation with the Thom polynomials. Moreover, we will discuss the irreducibility and smoothness of the locus of curves with fixed singularity type $\ua$ in  Section 4.  

Recently we realized that our method can be generalized to count singular hypersurfaces and more objects in higher dimensional smooth varities.  The result will appear in a subsequent paper. 

\subsection*{Acknowledgments} The first named author is partially supported by an NSF grant NSF0601002 and the second named author is supported by the Simons Postdoctoral Fellowship. The second named author would like to thank Professor Harris and Professor Kleiman for useful conversations and Professor S.T. Yau for his support and encouragements. We thank Vivek Shende for helpful discussion, especially the argument about the irreducibility of Severi strata in Section \ref{sec:irr}.

\section{The construction of $d_{\ua}(S,L)$}
In this section we  construct a zero cycle $d_{\ua}(S,L)$ on Hilbert schemes of points on $S$, and prove that this cycle can be used to count the number of curves with prescribed singularity $\ua$ in $L$. 
In order to  construct the cycle,  let us first recall some results in singularity theory. 

Let $\CC\{x,y\}$ be the local ring of the origin on  $\CC^2$,   $f$ be a germ in $\CC\{x,y\}$, and the Jacobian ideal $\langle  \dd{f}{x}, \ddy{f} \rangle$ be $J(f)$, then the  Milnor number $\mu(f)$ and Tjurina number  $\tau(f)$  are defined as:
$$ \mu(f)=\text{dim}_{\CC} \CC\{x,y\}/  J(f);\ \    \tau(f)=\text{dim}_{\CC} \CC\{x,y\}/\langle f+ J(f)\rangle.$$

Two planar curves $C_1$ and $C_2$ have \textit{analytic equivalent} singularities at the origin if their defining germs $f_1$ and $f_2$ in $\Cxy$ are contact equivalent; i.e. there exists  an automorphism $\phi$ of $\Cxy$ and a unit $u \in \Cxy$ such that 
$f = u\cdot \phi(g)$. 
We say they have \textit{topological equivalent} (or \textit{equisingular}) singularities  if the following equivalent conditions are satisfied (\cite{Gr}): 

\begin{enumerate}
\item there exists balls $B_1$ and $B_2$ with center $0$ such that 
$(B_1,B_1\cap C_1,0) $ is homeomorphic to  $(B_2,B_2\cap C_2,0) $,
\item $C_1$ and $C_2$ have the same number of branches, Puiseux pairs of their branches  $C_{1i}$ and $C_{2i}$ coincide, and intersection multiplicities $i(C_{1i},C_{1j})=i(C_{2i},C_{2j})$ for any $i$, $j$. 
\item The systems of multiplicity sequences of an embedded resolution coincide.
\end{enumerate}

It is easy to see that analytic equivalence implies topological equivalence. However,  the converse is true only for ADE singularities. The Milnor number is both analytic and topological invariant (by a result of Milnor), while the Tjurina number is only an analytic invariant. Therefore $\tau(\alpha)$ and  $\mu(\alpha)$ are well-defined when they are invariants of the singularity $\alpha$.

Recall that the tangent space of the miniversal deformation space $\mathsf{Def}$ of the singular curve $C:=\{f=0\}$ at the origin can be naturally identified with $ \CC\{x,y\}/\langle f+ J(f)\rangle $, and its dimension is the Tjurina number $\tau(f)$.   
The dimension of the miniversal deformation space also has an important geometric meaning: it is the expected codimension of the locus of curves in a linear system with same analytic singularity $\alpha$ of $C$ at the origin, and is denoted by $\text{codim}(\alpha)$. Roughly speaking, $\text{codim}(\alpha)$ is the number of conditions needed to have the singularity $\alpha$.
If $\alpha$ is a topological singularity type, then inside $\mathsf{Def}$ there is the \text{equisingular locus} $ES$,  which parametrizes equisingular or topologically trivial deformations. In this case, $\text{codim}(\alpha)$ is $\text{dim}_{\CC} \CC\{x,y\}/\langle f+ J(f)\rangle)-\text{dim}_{\CC} ES$ and  is also the expected codimension of curvesㄙ with singularity $\alpha$ in a linear system. 

A natural question is,  in order to determine the singularity type of  $f=0$ at the origin, is it sufficient to look at some lower degree terms of $f$? For example, does the curve $y^2=x^3+xy^{99}$ have  a cusp because the lower degree terms of $f$ is  $y^2=x^3$? The answer is yes, provided if the terms being ignored are of sufficiently high degrees, according to the finite-determinacy theorem:

\begin{defn} We call a germ  $f$ to be (analytically) $k$-determined if 
$f \equiv g\ \ (\text{mod } \fm^{k+1}) $ implies the curves $f=0$ and $g=0$ have analytic equivalent singularities at the origin. 

\end{defn} 

\begin{theo}[\cite{GLS}, theorem 2.23]\label{thm:GLS}
 If $f$ is (analytically) $k$-determined, then $$ \fm^{k+1} \subseteq \fm J(f) + \langle f \rangle.$$ 
 Conversely, if $$\fm^{k} \subseteq \fm J(f) + \langle f \rangle,$$ then $f$ is $k$-determined. 
\end{theo}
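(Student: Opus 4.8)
The plan is to prove both directions by the \emph{infinitesimal (homotopy) method} for the contact group $\cK$ acting on $\Cxy$, where $\cK$ consists of pairs $(\phi,u)$ with $\phi$ a coordinate change of $(\CC^2,0)$ fixing the origin and $u$ a unit, acting by $(\phi,u)\cdot g = u\cdot\phi(g)$. The single algebraic input used throughout is the identification of the Zariski tangent space to the $\cK$-orbit of $f$ at $f$ with the ideal
\[
 T_f\ :=\ \fm J(f) + \langle f\rangle\ =\ \fm\langle\ddx{f},\ddy{f}\rangle + \langle f\rangle ,
\]
the first summand being the image of infinitesimal coordinate changes $a\,\partial_x + b\,\partial_y$ with $a,b\in\fm$, the second that of infinitesimal rescalings by units, $f\mapsto(1+c)f$ with $c\in\Cxy$. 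So the two assertions to establish are $\fm^{k+1}\subseteq T_f$ (given $k$-determinacy) and $k$-determinacy (given $\fm^k\subseteq T_f$).

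For the converse implication, assume $\fm^k\subseteq T_f$ and let $g\equiv f\pmod{\fm^{k+1}}$; put $h:=g-f\in\fm^{k+1}$ and $f_t := f + th$ for $t\in[0,1]$. First I would verify that the hypothesis propagates along the path: since $\ddx{f_t},\ddy{f_t},f_t$ differ from $\ddx{f},\ddy{f},f$ by elements of $\fm^k,\fm^k,\fm^{k+1}$ respectively, the generators of $T_{f_t}$ and of $T_f$ differ by elements of $\fm^{k+1}$, so $\fm^k\subseteq T_f\subseteq T_{f_t}+\fm^{k+1}=T_{f_t}+\fm\cdot\fm^k$; Nakayama applied to $(\fm^k+T_{f_t})/T_{f_t}$ (a quotient of $\fm^k$, hence finitely generated) upgrades this to $\fm^k\subseteq T_{f_t}$ for every $t$. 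Hence $h\in\fm^{k+1}=\fm\cdot\fm^k\subseteq\fm\,T_{f_t}\subseteq\fm J(f_t)+\langle f_t\rangle$, so I may write $\dot f_t = h = a_t\,\ddx{f_t} + b_t\,\ddy{f_t} + c_t\,f_t$ with $a_t,b_t\in\fm$ and $c_t\in\Cxy$, the same Nakayama argument over $\CC\{x,y,t\}$ letting me choose these coefficients analytic in $t$. Integrating the time-dependent vector field $-a_t\,\partial_x - b_t\,\partial_y$ — whose flow fixes the origin and hence is defined on one neighbourhood of $0$ for all $t\in[0,1]$ — together with a compatible first-order equation for a unit $v_t$ with $v_0=1$ produces a family $(\phi_t,v_t)\in\cK$ with $v_t\cdot\phi_t(f_t)=f$ for all $t$; at $t=1$ this exhibits $g$ as contact equivalent to $f$, which is precisely $k$-determinacy.

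For the necessity, assume $f$ is $k$-determined. Here I would pass to finite jets: $k$-determinacy implies that the orbit of $j^{k+1}f$ under the finite-dimensional algebraic jet group $\cK^{k+1}$ acting on $\Cxy/\fm^{k+2}$ contains the entire affine fibre $\{\,j^{k+1}g : j^k g = j^k f\,\}$ through that point. An orbit of an algebraic group action is a smooth locally closed subvariety, and its tangent space at $j^{k+1}f$ is the image of $T_f$ in $\Cxy/\fm^{k+2}$; since the orbit contains an affine-linear subspace through that point, that subspace lies in the tangent space, i.e.\ $\fm^{k+1}\subseteq T_f + \fm^{k+2}$. One last Nakayama step applied to $(\fm^{k+1}+T_f)/T_f$ then yields $\fm^{k+1}\subseteq T_f = \fm J(f) + \langle f\rangle$.

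The step I expect to be the genuine obstacle is staying in the \emph{convergent} rather than the \emph{formal} category: in the converse implication the coefficients $a_t,b_t,c_t$ and the integrated flows must be honestly analytic in $(x,y,t)$, and the flow must exist on a single neighbourhood of the origin uniformly for $t\in[0,1]$. This is handled either by carrying out the propagation/Nakayama argument directly over $\CC\{x,y,t\}$, or, at the cost of an extra invocation, by first proving \emph{formal} $k$-determinacy and then quoting Artin's theorem that a formal contact equivalence between convergent germs may be replaced by an analytic one. By comparison the remaining ingredients — the tangent-space identification, the Nakayama arguments, and the finite-jet reduction in the necessity half — are routine.
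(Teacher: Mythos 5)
The paper does not prove this statement: it is quoted verbatim (up to a harmless weakening of the sufficient condition) from the cited reference [GLS, Theorem 2.23], so there is no in-paper proof to compare against. Your argument is essentially the standard one — and in fact essentially the proof given in that reference: identify the tangent space to the contact orbit with $\fm J(f)+\langle f\rangle$, prove necessity by passing to the finite-dimensional jet-group action (orbit smooth, affine fibre inside the orbit forces the inclusion on tangent spaces, then Nakayama to remove the $\fm^{k+2}$ error), and prove sufficiency by the homotopy/Thom--Levine method along $f_t=f+th$, with Nakayama propagating the tangent-space inclusion along the path. All of these steps are correct, and you rightly isolate the genuine delicacy, namely solving $\dot f_t=a_t\,\ddx{f_t}+b_t\,\ddy{f_t}+c_tf_t$ with coefficients convergent in $(x,y,t)$ and integrating the resulting vector field; working over $\CC\{x,y,t\}$ (or invoking Artin approximation) is indeed how this is handled. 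The one point your write-up slightly glosses over is that the relative Nakayama argument over $\CC\{x,y,t\}$ only produces coefficients analytic near a given parameter value $t_0$, so one must cover $[0,1]$ by finitely many such intervals and compose the resulting contact equivalences (using transitivity); this is routine but should be said. With that caveat the proposal is a correct proof of the cited theorem.
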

\begin{coro}[\cite{GLS}, corollary 2.24]
For any germ $f\in \fm \subset \Cxy$, $f$ is $\tau(f)$-determined.\end{coro}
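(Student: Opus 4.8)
The plan is to deduce the statement from the finite determinacy criterion, Theorem~\ref{thm:GLS}: by its sufficiency direction it is enough to produce an inclusion of ideals of the shape $\fm^{N}\subseteq\fm J(f)+\langle f\rangle$ with $N$ controlled by $\tau(f)$, and such an inclusion will follow from the finiteness of $\tau(f)$ alone by Nakayama's lemma. We may assume that $f$ has an isolated singularity at the origin, since otherwise $\tau(f)=\infty$ and $f$ is trivially $\tau(f)$-determined (the only germ congruent to $f$ modulo every power of $\fm$ is $f$ itself).

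Write $\tau=\tau(f)$ and let $A:=\Cxy/\langle f+J(f)\rangle$ be the Tjurina algebra, so that $\dim_\CC A=\tau$ by definition; it is a finite-dimensional local $\CC$-algebra with residue field $\CC$, and its maximal ideal $\fm_A$ is the image of $\fm$. The key point I would establish is $\fm^{\tau}\subseteq\langle f+J(f)\rangle$, equivalently $\fm_A^{\tau}=0$ in $A$. This is the standard length estimate: by Nakayama's lemma $\fm_A^{j}\neq 0$ forces $\fm_A^{j+1}\subsetneq\fm_A^{j}$, so each nonzero graded piece $\fm_A^{j}/\fm_A^{j+1}$ has dimension at least $1$; were $\fm_A^{\tau}$ nonzero, the $\tau+1$ pieces for $j=0,\dots,\tau$ would all be nonzero, contradicting $\dim_\CC A=\tau$. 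Multiplying the resulting inclusion by $\fm$ gives $\fm^{\tau+1}\subseteq\fm\langle f\rangle+\fm J(f)\subseteq\fm J(f)+\langle f\rangle$, which is precisely the hypothesis that Theorem~\ref{thm:GLS} converts into the asserted determinacy of $f$. Running the same Nakayama argument with the Milnor algebra $\Cxy/J(f)$ in place of $A$ yields the analogous bound in terms of $\mu(f)$; since $\tau(f)\le\mu(f)$ and since contact equivalence is governed by the full ideal $\langle f+J(f)\rangle$ rather than $J(f)$, it is the Tjurina version recorded here that is both sharper and the relevant one.

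Modulo Theorem~\ref{thm:GLS}, which carries the analytically substantive part of the statement, there is no serious obstacle left; the only items needing a little care are the reduction to the isolated-singularity case and the exact power of $\fm$ to feed into the determinacy criterion (whether it is $\fm^{\tau}$ or $\fm^{\tau+1}$, a one-step discrepancy that is pinned down by the precise normalization in the statement of Theorem~\ref{thm:GLS}). In effect the whole argument reduces to the finiteness $\dim_\CC A<\infty$, which is exactly the hypothesis that $f=0$ has an isolated singularity at the origin.
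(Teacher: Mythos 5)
The paper does not prove this corollary---it is quoted from \cite{GLS}---so there is no in-text argument to compare against; your Nakayama/length argument is the standard derivation and the algebra in it is correct: $\dim_\CC \Cxy/\langle f, J(f)\rangle=\tau<\infty$ forces $\fm^{\tau}\subseteq \langle f\rangle+J(f)$, and multiplying by $\fm$ feeds the sufficiency half of Theorem~\ref{thm:GLS}.

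The one point you defer---``whether it is $\fm^{\tau}$ or $\fm^{\tau+1}$, pinned down by the normalization of Theorem~\ref{thm:GLS}''---is not a normalization artifact and cannot be made to go away: your inclusion $\fm^{\tau+1}\subseteq \fm J(f)+\langle f\rangle$ yields, via the criterion exactly as stated, that $f$ is $(\tau+1)$-determined, not $\tau$-determined, and no refinement of the argument closes this gap because $\tau$-determinacy is simply false in general. For the node $f=xy$ one has $\tau=1$ but $f\equiv 0 \pmod{\fm^{2}}$, so $f$ is not $1$-determined; consistently, the paper's own Example records $k(\text{node})=2=\tau+1$ (and $k=\tau+1$ again for the cusp). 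The source result (\cite{GLS}, Corollary 2.24) asserts $(\tau(f)+1)$-determinacy, and the statement as transcribed in the paper has dropped the ``$+1$''. So your proof is essentially the right proof of the correct statement; the discrepancy you noticed lies in the statement being proved, not in your argument, and you should resolve it by proving $(\tau+1)$-determinacy rather than hoping the exponent can be improved. (Your reduction to the isolated-singularity case and the parallel $\mu$-version are fine.)
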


Since all representatives of an analytical (resp. topological) singularity $\alpha$ are $\tau(\alpha)$-determined (resp. $\mu(\alpha)$-determined), we can define $k(\alpha)$ to be the smallest $k$ such that all representatives of $\alpha$ are $k$-determined.



\begin{defn} For any isolated planar (analytic or topological) singularity $\alpha$, pick  a representative $f_{\alpha}\in \Cxy$ and let $N(\alpha)$ be the length of the zero-dimensional closed subscheme $\xi_{\alpha}=\CC\{x,y\}/\langle f_{\alpha}, \fm^{k(\alpha)+1}\rangle $.    
\end{defn}
It is easy to check that the number $N(\alpha)$ does not depend on the choice of $f_{\alpha}$.

\begin{eg} If $\alpha$ is a simple node,  we can choose $f_{\alpha}=xy$. Then $\tau(\alpha)=\text{codim}(\alpha)=1$, $k(\alpha)=2$, $\xi_{\alpha}=\CC\{x,y\}/\langle xy,m^3\rangle $ and  $N(\alpha)=5$.
\end{eg}

\begin{eg} If $\alpha$ is an ordinary cusp, we can choose $f_{\alpha}=y^2-x^3$. Then $\tau(\alpha)=\text{codim}(\alpha)=2$, $k(\alpha)=3$, $\xi_{\alpha}=\CC\{x,y\}/\langle y^2-x^3,m^4\rangle $ and  $N(\alpha)=7$.
\end{eg}

\begin{eg}\label{eg:ana-n-fold} If $\alpha$ is an analytical $n$-fold point which is defined by  $f_{\alpha}=x^n-y^n$, then $\tau(\alpha)=\text{codim}(\alpha)=(n-1)^2=\text{dim}_{\CC} \CC[x^iy^j], 0\leq i,j \leq n-2$, $k(\alpha)=n$, $\xi_{\alpha}=\CC\{x,y\}/\langle x^n-y^n, \fm^{n+1}\rangle $ and  $N(\alpha)=\frac{(n+1)(n+2)}{2}-1=\frac{n(n+1)}{2}$.
\end{eg}

\begin{eg}\label{eg:top-n-fold} If $\alpha$ is a topological $n$-fold point ($n\geq 3$), we can  choose $f_{\alpha}=x^n-y^n$ again. 
Then $\tau(f_{\alpha})=(n-1)^2$, $\text{codim}(\alpha)=(n-1)^2-(n-3)$ (because an analytic isomorphism can only send three branches to $x=0$, $y=0$ and $x=y$, so $\text{dim}_{\CC} ES=n-3$, corresponding to the slope of the rest $n-3$ branches), $k(\alpha)=n$, $\xi_{\alpha}=\CC\{x,y\}/\langle x^n-y^n,\fm^{n+1}\rangle $ and $N(\alpha)=\frac{(n+1)(n+2)}{2}-1=\frac{n(n+1)}{2}$.
\end{eg}

Recall that if  $\ua=(\alpha_1, \alpha_2,\ldots, \alpha_{l(\ua)})$ is  a collection of isolated planar (analytic or topological) singularity types.  A curve $C$ has singularity type $\ua$ if $C$ is singular at exactly $l(\ua)$ distinct points $\{x_1,x_2,\ldots, x_{l(\ua)}\}$ and the  singularity type at $x_i$ is $\alpha_i$. Note we consider the topological and analytic singularities defined by the same germ as different singularities. 

Define 
\begin{align*}
 N(\ua)=\sum_{i=1}^{l(\ua)} N(\alpha_i), \,\,\text{ and} \,\,
\text{codim}(\ua)=\sum_{i=1}^{l(\ua)} \text{codim}(\alpha_i).\ \ 
\end{align*}
We comment that $\text{codim}(\ua)$ is the expected codimension of the locus in $|L|$ which parametrizes curves with singularity types $\ua$. In Proposition \ref{prop:daL}, we will prove that this locus is nonempty and of expected codimension if $L$ is $(N(\ua)+2)$-very ample. 
 
For a smooth surface $S$, let $S^{[N(\ua)]}$ be the Hilbert scheme of $N(\ua)$ points on $S$. Define $S^0(\ua)\subset S^{[N(\ua)]}$ to be the set of points $  \coprod_{i=1}^{l(\ua)} \eta_i$ satisfying the following conditions: 
\begin{enumerate}
\item $\eta_i$'s are supported on distinct points of $S$;


\item  $\eta_i$ is isomorphic  to $\CC\{x,y\}/\langle g_i, \fm^{k(\alpha_i)+1} \rangle$, for a germ $g_i$ such that $g_i=0$ has singularity type $\alpha_i$ at the origin. 
\end{enumerate}

Consider the closure $S(\ua)=\overline{S^0(\ua)}$ as a closed subscheme in $S^{[N(\ua)]}$. For every $n\in \NN$, let 
$Z_n\subset S\times S^{[n]}$ be the universal closed subscheme with projections 
$p_n : Z_n \to S$, $q_n : Z_n \to S^{[n]}$. 
If $L$ is a line bundle on $S$, define $L^{[n]} = {(q_n)}_* (p_n)^*L$. Because $q_n$ is finite and flat, $L^{[n]}$ is a vector bundle of rank $n$ on $S^{[n]}$.
To count curves with singularity type $\ua$, we use the cycle 
$$d_{\ua}(S, L)=\int_{S(\ua)}  c_{N(\ua)-\text{codim}(\ua)}(L^{[N(\ua)]}).$$

\begin{lemm} The term $d_{\ua}(S, L)$ is a zero cycle;  i.e. $\text{dim }S(\ua)=N(\ua)-\text{codim}(\ua)$. 
\end{lemm}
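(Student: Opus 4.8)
The plan is to compute $\dim S^0(\ua)$ directly; since $S(\ua)=\overline{S^0(\ua)}$ this gives $\dim S(\ua)=\dim S^0(\ua)$, and $S^0(\ua)\neq\emptyset$ because $\coprod_i\xi_{\alpha_i}$, placed at any $l(\ua)$ distinct points of $S$ in local coordinates, lies in it. First I would strip off the positions of the singular points. Inside $S^{[N(\ua)]}$ consider the locally closed locus $W$ of subschemes that split as $\coprod_{i=1}^{l(\ua)}\eta_i$ with each $\eta_i$ punctual of length $N(\alpha_i)$ and pairwise distinct supports; recording the supports defines a morphism $W\to \mathrm{Conf}_{l(\ua)}(S)$ which, locally on the target in the \'etale topology, is the product bundle with fibre $\prod_i \mathrm{Hilb}^{N(\alpha_i)}(\CC^2)_0$. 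As $S^0(\ua)\subset W$ is in each such fibre the product $\prod_i\cH_{\alpha_i}$, where $\cH_\alpha\subset\mathrm{Hilb}^{N(\alpha)}(\CC^2)_0$ is the translation-invariant locus of subschemes cut out by an ideal $\langle g,\fm^{k(\alpha)+1}\rangle$ with $g=0$ of singularity type $\alpha$ at the origin, one gets $\dim S^0(\ua)=2\,l(\ua)+\sum_i\dim\cH_{\alpha_i}$, and the statement reduces to the single-singularity identity $\dim\cH_\alpha=N(\alpha)-\codim(\alpha)-2$.

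To compute $\dim\cH_\alpha$, set $k=k(\alpha)$ and let $m$ be the multiplicity of $\alpha$. By the finite-determinacy definition of $k(\alpha)$ the ideal $\langle g,\fm^{k+1}\rangle$ depends only on $g\bmod\fm^{k+1}$, and every $g$ of type $\alpha$ has order exactly $m$; so if $J_\alpha\subset\Cxy/\fm^{k+1}$ denotes the locus of jets $g$ with $g=0$ of type $\alpha$ at the origin, then $g\mapsto\langle g,\fm^{k+1}\rangle$ is a surjection $J_\alpha\twoheadrightarrow\cH_\alpha$. Its fibres are the rescalings by units: $\langle g_1,\fm^{k+1}\rangle=\langle g_2,\fm^{k+1}\rangle$ precisely when $g_1\equiv u g_2\pmod{\fm^{k+1}}$ for a unit $u$, and since $\Cxy$ is a domain in which orders add, $ug\bmod\fm^{k+1}$ depends only on $u\bmod\fm^{k+1-m}$ with the resulting linear map $\Cxy/\fm^{k+1-m}\to\fm^{m}/\fm^{k+1}$ injective. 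Thus every fibre is a dense open subset of a linear space of dimension $\dim\Cxy/\fm^{k+1-m}$, whence $\dim\cH_\alpha=\dim J_\alpha-\dim\Cxy/\fm^{k+1-m}$; the same order computation applied to $f_\alpha$ gives $N(\alpha)=\dim\Cxy/\fm^{k+1}-\dim\Cxy/\fm^{k+1-m}$. Subtracting, $\dim\cH_\alpha=N(\alpha)-\bigl(\dim\Cxy/\fm^{k+1}-\dim J_\alpha\bigr)$, so it remains to show that $J_\alpha$ has codimension exactly $\codim(\alpha)+2$ in the jet space $\Cxy/\fm^{k+1}$.

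This last point is where the choice $k=k(\alpha)$ is used. In the analytic case $J_\alpha$ is a single orbit of the contact group acting on $\Cxy/\fm^{k+1}$; by Theorem~\ref{thm:GLS} and the definition of $k(\alpha)$ as the determinacy degree, $\fm^{k+1}\subseteq\langle f_\alpha\rangle+\fm\,J(f_\alpha)$, so the tangent space to the orbit at $f_\alpha$ is $\bigl(\langle f_\alpha\rangle+\fm\,J(f_\alpha)\bigr)/\fm^{k+1}$ and the orbit is smooth of codimension $\dim_{\CC}\Cxy/\bigl(\langle f_\alpha\rangle+\fm\,J(f_\alpha)\bigr)=\tau(\alpha)+2=\codim(\alpha)+2$; homogeneity under the contact group makes $\dim J_\alpha$ constant along $J_\alpha$. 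In the topological case $J_\alpha$ is instead the equisingular family over $f_\alpha$, and one uses the equisingular version of versality together with the smoothness of the equisingular stratum $ES\subset\mathsf{Def}$ (Wahl; see \cite{GLS}, \cite{Gr}), of dimension $\tau(\alpha)-\codim(\alpha)$, to conclude $\codim_{\Cxy/\fm^{k+1}}(J_\alpha)=(\tau(\alpha)+2)-\dim ES=\codim(\alpha)+2$. Feeding this back yields $\dim\cH_\alpha=N(\alpha)-\codim(\alpha)-2$ and hence $\dim S(\ua)=2\,l(\ua)+\sum_i\bigl(N(\alpha_i)-\codim(\alpha_i)-2\bigr)=N(\ua)-\codim(\ua)$.

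The main obstacle is the middle step of the third paragraph: controlling the codimension of $J_\alpha$ in the jet space, i.e.\ that the finite jet space of the minimal determinacy order is versal (respectively equisingularly versal) and, for topological types, that the equisingular stratum in $\mathsf{Def}$ is smooth of the expected dimension --- facts I would import from singularity theory. The rest --- the product decomposition over the configuration space, the description of the fibres of $J_\alpha\to\cH_\alpha$, and the two order computations --- is routine.
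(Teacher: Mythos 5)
Your proposal is correct and follows essentially the same route as the paper's proof: both reduce to the single‑singularity case, fiber $S^0(\alpha)$ over the support point, identify the remaining parameter space with the contact orbit (resp.\ equisingular family) in the jet space $\Cxy/\fm^{k(\alpha)+1}$ whose tangent space is $(\langle f\rangle+\fm J(f))/\fm^{k(\alpha)+1}$ via Theorem~\ref{thm:GLS}, quotient by unit rescalings of dimension $\dim_\CC\Cxy/\fm^{k(\alpha)+1-m(f)}$, and use $\dim ES=\tau(\alpha)-\codim(\alpha)$ for topological types. The only cosmetic difference is that you organize the count through the codimension of the jet locus $J_\alpha$ over the configuration space, whereas the paper runs the identical arithmetic directly on $\dim S^0(\alpha)$.
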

\begin{proof}
It suffices to prove that for every isolated singularity $\alpha$, the dimension of $S^0(\alpha)$ is equal to $N(\alpha)-\text{codim}(\alpha)$. 

Suppose that $\alpha$ is an analytic singularity, by definition every closed subscheme in $S^0(\alpha)$ is only supported at one point on $S$. Define the projection $p: S^0(\alpha)\to S$ to be the map sending a closed subscheme to its support, then the fiber over a point  is the collection of closed subschemes $\CC\{x,y\}/\langle g, \fm^{k(\alpha)+1} \rangle$ supported at that point such that  $g=0$ has singularity $\alpha$. If we pick a representative $f$, all such $g$ is in the orbit of $f$ under the group action by 
$\cK={\Cxy}^* \ltimes  Aut (\Cxy)$.  
Let $orb(f)$ be the orbit of $f$ in $\Cxy/\fm^{k(\alpha)+1}$ under the action of the restriction of $\cK$ on the $\Cxy/\fm^{k(\alpha)+1}$. 
According to \cite{Gr}, $orb(f)$ is smooth and its tangent space at $f$ is 􏰂$$(\fm \cdot J(f) + \langle f \rangle + \fm^{k(\alpha)+1}􏰃) /\fm^{k(\alpha)+1}􏰃.$$ 
But since $\CC\{x,y\}/\langle f, \fm^{k(\alpha)+1} \rangle$ and $\CC\{x,y\}/\langle u\cdot f, \fm^{k(\alpha)+1} \rangle$ described the same closed subscheme if $u$ is a unit in $\Cxy/\fm^{k(\alpha)+1-m(f)}$ (where $m(f)$ is the multiplicity of $f$ at the origin), $orb(f)/(\Cxy/\fm^{k(\alpha)+1-m(f)})^*$ is isomorphic to the fiber of $p$ over every point on $S$. 

The discussion above and Theorem \ref{thm:GLS} imply
\begin{align*}
&\text{dim}_{\CC} S^0(\alpha)\\
=& 
2+ \text{dim}_{\CC} (\fm \cdot J(f) + \langle f \rangle + \fm^{k(\alpha)+1}) /\fm^{k(\alpha)+1}-\text{dim}_{\CC} \Cxy/\fm^{k(\alpha)+1-m(f)}\\
=& \text{dim}_{\CC} (J(f) + \langle f \rangle ) /\fm^{k(\alpha)+1}􏰃-\text{dim}_{\CC} \Cxy/\fm^{k(\alpha)+1-m(f)}\\
=&\text{dim}_{\CC}  \Cxy/\fm^{k(\alpha)+1}􏰃
-\text{dim}_{\CC}  \Cxy/( J(f) + \langle f \rangle 􏰃)-
\text{dim}_{\CC} \Cxy/\fm^{k(\alpha)+1-m(f)}\\
=&\left(\text{dim}_{\CC}  \Cxy/\langle f, \fm^{k(\alpha)+1}􏰃 \rangle
+ \text{dim}_{\CC} \langle f, \fm^{k(\alpha)+1}􏰃\rangle/\fm^{k(\alpha)+1}􏰃\right)\\
&-\text{codim}(\alpha)-\text{dim}_{\CC} \Cxy/\fm^{k(\alpha)+1-m(f)}\\
=&N(\alpha)+\text{dim}_{\CC} \Cxy/\fm^{k(\alpha)+1-m(f)}-\text{codim}(\alpha)-\text{dim}_{\CC} \Cxy/\fm^{k(\alpha)+1-m(f)}\\
=&N(\alpha)-\text{codim}(\alpha).
\end{align*}

If  $f=0$ defines an analytic singularity $\alpha$ and a topological singularity $\beta$ at the origin, it follows from definition that 
$\text{dim}_{\CC} S^0(\beta)=\text{dim}_{\CC} S^0(\beta)+\text{dim}_{\CC} ES$,
$N(\alpha)=N(\beta)$, and 
$\text{codim}(\beta)=\text{codim}(\alpha)-\text{dim}_{\CC} ES$. 
Therefore the desired equality is established for topological singularity types. 
\end{proof}

\begin{prop}\label{prop:daL} Assume L is $(N(\ua)+2)$-very ample, then a general linear subsystem $V\subset |L|$ of dimension $\text{codim}(\ua)$ contains precisely $d_{\ua}(S, L)$ curves  whose singularity types are $\ua$. 
\end{prop}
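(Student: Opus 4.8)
Write $N=N(\ua)$ and $c=\codim(\ua)$. The plan is to interpret the count as the degree of a Thom--Porteous degeneracy cycle on $S(\ua)$ built from the tautological bundle $L^{[N]}$, to use $(N+2)$-very ampleness to match this cycle bijectively with the curves in $V$ of singularity type $\ua$, and to use a handful of dimension counts on Hilbert schemes to discard all spurious contributions.

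\emph{The tautological set-up.} A general $V\cong\PP^c\subset|L|$ corresponds to a subspace $W\subset H^0(L)$ of dimension $c+1$, and evaluation of sections on length-$N$ subschemes gives a morphism of vector bundles $\phi_V\colon W\otimes\cO_{S^{[N]}}\to L^{[N]}$ whose fibre over $\xi$ is the restriction map $W\to H^0(L|_\xi)$. A nonzero section $s\in W$ cuts out a curve containing $\xi$ exactly when $s|_\xi=0$, i.e.\ when $\phi_V$ has corank $\ge 1$ at $\xi$. By the Thom--Porteous formula the corank-$\ge1$ locus of a map from a rank-$(c+1)$ bundle to a rank-$N$ bundle has expected codimension $N-c$ and represents the class $c_{N-c}(L^{[N]})$; since $\dim S(\ua)=N-c$ by the preceding lemma, the locus $D_V:=\{\xi\in S(\ua):\mathrm{corank}\,\phi_V(\xi)\ge1\}$ has expected dimension $0$, and once we know that it is finite, reduced, and contained in the smooth part $S^0(\ua)$, we get $\#D_V=\int_{S(\ua)}c_{N-c}(L^{[N]})=d_{\ua}(S,L)$.

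\emph{The bijection with singular curves.} If $C\in V$ has singularity type $\ua$, its germ at $x_i$ is a representative of $\alpha_i$, which, being $k(\alpha_i)$-determined, determines modulo $\fm^{k(\alpha_i)+1}$ a point $\eta_i\in S^0(\alpha_i)$; then $\xi_C:=\coprod_i\eta_i\in S^0(\ua)$ has length $N$, lies in $C$, and hence lies in $D_V$. Conversely fix $\xi=\coprod_i\eta_i\in S^0(\ua)$ with $\eta_i\cong\Cxy/\langle g_i,\fm^{k(\alpha_i)+1}\rangle$; a curve $C=\{f=0\}$ through $\xi$ satisfies $f\equiv h_ig_i\pmod{\fm^{k(\alpha_i)+1}}$ at $x_i$ for some $h_i$. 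If each $h_i$ is a unit, then $h_ig_i$ is again a representative of $\alpha_i$, hence $k(\alpha_i)$-determined, and Theorem \ref{thm:GLS} forces $C$ to have singularity exactly $\alpha_i$ at $x_i$. The locus where some $h_i$ is a non-unit forces $C$ to contain the length-$(N+1)$ scheme obtained from $\xi$ by enlarging $\eta_i$ to $\Cxy/\langle\fm g_i,\fm^{k(\alpha_i)+1}\rangle$; the locus where $C$ acquires an extra singular point, in particular where $C$ is non-reduced, forces $C$ to contain $\xi\amalg(\cO_q/\fm_q^2)$ for some $q\notin\mathrm{supp}\,\xi$, a subscheme of length $N+3$ which moves in a $2$-parameter family as $q$ varies. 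By $(N+2)$-very ampleness, curves through a fixed subscheme of length $\le N+3$ form a linear subsystem of the expected codimension, so both bad families cut out, inside the $\PP^{\dim|L|-N}$ of curves through $\xi$, a subvariety of positive codimension; thus a general curve through $\xi$ has singularity type $\ua$. Combining the two directions, $C\mapsto\xi_C$ and $\xi\mapsto$(the curve through $\xi$) are mutually inverse bijections between curves in $V$ of type $\ua$ and the points of $D_V$, provided $D_V\subset S^0(\ua)$ and $\mathrm{corank}\,\phi_V\equiv1$ on $D_V$.

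\emph{Genericity and transversality.} Finally one checks, by standard general-position arguments for the very ample system $|L|$, that for a general $V$: the boundary $S(\ua)\setminus S^0(\ua)$, being of dimension $<N-c$, is missed by $D_V$; the corank is exactly $1$ along $D_V$, so each point of $D_V$ supports a unique curve in $V$; and $D_V$ is reduced, so the Thom--Porteous class computes its length honestly --- the last point uses that $L$ is ample enough to perturb sections independently near each point of the (finite) support of a given $\xi$. This yields $d_{\ua}(S,L)=\#\{C\in V:\ C\text{ has singularity type }\ua\}$. The main obstacle is the middle step: for each way a curve in the cycle could fail to have type $\ua$ (worse singularity at some $x_i$, an extra singular point, non-reducedness, or a limiting subscheme on the boundary of $S^0(\ua)$), one must pin down the exact zero-dimensional subscheme the curve is forced to contain and verify that $(N(\ua)+2)$-very ampleness makes the resulting locus in $|L|$ have codimension exceeding $\codim(\ua)$.
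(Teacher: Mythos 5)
The proposal is correct and follows essentially the same route as the paper: it realizes $d_{\ua}(S,L)$ as a Thom--Porteous degeneracy locus of the evaluation map $W\otimes\cO\to L^{[N(\ua)]}$ over $S(\ua)$, uses finite determinacy to show that a unit factor $h_i$ forces the singularity to be exactly $\alpha_i$, and excludes all spurious contributions (non-unit $h_i$, extra singular points, non-reducedness, the boundary of $S^0(\ua)$) by containment in the same auxiliary subschemes $\Cxy/\langle\fm g_1,\fm^{k(\alpha_1)+1}\rangle$ and $\xi\amalg\cO_q/\fm_q^2$ together with dimension counts under $(N(\ua)+2)$-very ampleness. The only presentational difference is that the student phrases part of the exclusion pointwise (for a fixed $\xi$) and defers the global incidence-variety dimension count, which the paper carries out by applying Thom--Porteous to the closures of $\tilde S^0(\ua)$ and $S'(\ua)$ in the larger Hilbert schemes.
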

\begin{proof}
The structure of the proof is essentially the same as (\cite{Gott}, proposition 5.2). However, we have to make necessary generalization to deal with all singularity types. 

Because $L$ is $N(\ua)$-very ample, $H^0(L)\to L|_{\xi}$ is surjective for every $\xi\in \Hilb{S}{N(\ua)}$. If $\{s_i\}$ is a basis of $V$, then 
$\{(q_n)_*(p_n)^* s_i\}$ are global sections in $H^0(L^{[N(\ua)]})$. 
For general $V$, the cycle of the locus $W$ where $\{(q_n)_*(p_n)^* s_i\}$ are linearly dependent on $S(\ua)$ can be expressed as $[W]=d_{\ua}(S,L)=\int_{S(\ua) }c_{N(\ua)-\text{codim}(\ua)}(\Hilb{L}{N(\ua)})$ by the Thom-Porteous formula and (\cite{Fu}, example 14.4.2). 
Applying the Thom-Porteous formula again to $S(\ua)\backslash S^0(\ua)$,  we conclude that $W$  only supports on $S^0(\ua)$. By an argument similar to (\cite{Gott}, proposition 5.2),  $W$ is smooth for general $V$. Therefore $d_{\ua}(S,L)$ is the number of curves in a general   $V$ which contains a point in $S^0(\ua)$.

Next, we show curves in $d_{\ua}(S,L)$ can not have more than $l(\ua)+1$ singular points. Assume $L$ is $(N(\ua)+2)$-very ample and suppose that there is a curve $C$ in $d_{\ua}(S,L)$ with $l(\ua)+1$ singular points.  
Then $C$ must contain a point in 
$$S'(\ua)=\left\{\displaystyle \coprod_{i=1}^{l(\ua)+1} \eta_i  \,\left|\,\,
 \coprod_{i=1}^{l(\ua)} \eta_i \in S^0(\ua) \text{ and } \eta_{l(\ua)+1}\cong
\Cxy/\fm^2 \right. \right\}\subset \Hilb{S}{N(\ua)+3}.$$
Apply the Thom-Porteous formula to the closure of $S'(\ua)$ and a dimension count proves that there is no such curve in a general $V\subset |L|$ of dimension $\codim(\ua)$. It also follows that $C$ must be reduced. 

Finally, we prove that curves in $d_{\ua}(S,L)$ must have singularities  type precisely $\ua$. A curve $C$ in $d_{\ua}(S,L)$ must contain a point 
$\coprod_{i=1}^{l(\ua)} \eta_i $ in $S^0(\ua)$ and hence has singularity 	``at least'' $\alpha_i$ at $x_i$, where  $x_i$ is the support of $\eta_i$. 
If the singularity type of $C$ is not $\ua$, without loss of generality we can assume the singularity type at $x_1$ is not $\alpha_1$. 
Let $\eta_1=\Cxy / \langle g_1, \fm^{k(\alpha_1)+1}\rangle$ and the germ of $C$ at  $x_1$ be $f$. 
Since $C$ contains $\eta_1$, $f\in \langle g_1, \fm^{k(\alpha_1)+1}\rangle$ and thus $f\equiv u\cdot g_1$ (mod $\fm^{k(\alpha_1)+1}$). 
If $u$ is a unit, then $uf$ also defines $C$ and the finite determinacy theorem implies $C$ must have singularity precisely $\alpha_1$ and this is a contradiction. 
Otherwise, $u$ is not a unit and $f$ is in the ideal  $\langle g_1\fm, \fm^{k(\alpha_1)+1}\rangle$.  Let $\tilde{S}^0(\ua)$ be the set of $ \Cxy/\langle g_1\fm, \fm^{k(\alpha_1)+1}\rangle
\cup  \left(\coprod_{i=2}^{l(\ua)}\eta_i \right)$ such that  there exists $\eta_1=\Cxy/\langle g_1, \fm^{k(\alpha_1)+1}\rangle$ and 
$\coprod_{i=1}^{l(\ua)}\eta_i \in S^0(\ua)$. 
The natural map from $S^0(\ua)$ to $\tilde{S}^0(\ua)$, which sends $\coprod_{i=1}^{l(\ua)}\eta_i$ to  $\Cxy/\langle g_1\fm, \fm^{k(\alpha_1)+1}\rangle
\cup  \left(\coprod_{i=2}^{l(\ua)}\eta_i \right)$, is surjective and therefore $\text{dim } S^0(\ua)\geq \text{dim } \tilde{S}^0(\ua)$. 
Let $\tilde{S}(\ua)$ be the closure of $\tilde{S}^0(\ua)$ in $\Hilb{S}{N(\ua)+1}$ and apply the Thom-Porteous formula to $\tilde{S}(\ua)$, we see $C$ contributes a positive number in the counting $\int_{\tilde{S} (\ua)} c_{N(\ua)+1-\codim(\ua)}(\Hilb{L}{N(\ua)+1})$\footnote{It can be checked that $g_1$ is never contained in $\fm^{k(\ua)+1}$. 
So the length of  $\Cxy/\langle g_1\fm, \fm^{k(\alpha_1)+1}\rangle$ is the length of $\Cxy/\langle g_1, \fm^{k(\alpha_1)+1}\rangle$ $-1$.}.  On the other hand, this count is zero by dimension reason. 
It leads to a contradiction and this proves that $C$ must have singularity type precisely $\ua$.  
\end{proof}

\begin{rem} Kleiman pointed out to us that  one can associate every topological singularity $\alpha$ to a complete ideal $I_{\alpha}$, such that a general element in $I_{\alpha}$ defines a curve with singularity $\alpha$. 
Therefore we can also associate $\alpha$ to  $\Cxy/I_{\alpha}$ in our approach. This may weaken the ampleness condition needed in the Proposition \ref{prop:daL} and in Theorem \ref{thm:univ}. \end{rem}

\section{Main results}\label{sec:3}
In this section, we prove the existence of universal polynomial $T_{\ua}$ that  counts the number of curves of singularity type $\ua$ in a sufficiently ample linear system. Moreover, we construct the generating series containing all $T_{\ua}$ and show this series has a very special form and is multiplicative in \fourtopand. 

We assign a formal variable $x_{\alpha}$ to every isolated singularity type $\alpha$, and   define $x_{\ua}=\prod_{i=1}^{l(\ua)} x_{\alpha_i}$ if $\ua=(\alpha_1, \alpha_2,\ldots, \alpha_{l(\ua)})$. The multiplication  $x_{\ua'}\cdot x_{\ua''}$ is equal to $x_{\ua}$ if and only if $\ua$ is the union of $\ua'$ and $\ua''$. The multiplication is commutative because permutations of $\alpha_i$ does not change $\ua$.  

\begin{defn}
For a line bundle $L$ on $S$ ($L$ does not need to be ample), we put $d_{\ua}(S,L)=1$ if $\ua$ is the empty set (it corresponds to the number of smooth curves in $|L|$).
Define the generating series

$$T(S,L)=\sum_{\ua} d_{\ua} (S, L) x_{\ua} .$$ 
\end{defn}

\begin{theo}\label{thm:Ai}There exist universal power series $A_1$, $A_2$, $A_3$, $A_4$  in $\QQ[[x_\alpha]]$ such that the generating function $T(S,L)$ has the form
\begin{align}\label{eq:Ai}T(S,L)=A_1^{L^2}A_2^{LK_S}A_3^{c_1(S)^2}A_4^{c_2(S)}.\end{align}\end{theo}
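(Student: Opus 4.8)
The plan is to pass to logarithms and reduce the statement to the structure of the algebraic cobordism group $\omega_{2,1}$ of smooth projective surfaces equipped with a line bundle. Since $d_\emptyset(S,L)=1$, the series $T(S,L)$ lies in $1+\fm$, where $\fm$ denotes the maximal ideal of $\QQ[[x_\alpha]]$, so $\log T(S,L)\in\fm$ is well defined. The formula \eqref{eq:Ai} is equivalent to the assertion that, as a function of the pair $(S,L)$, the element $\log T(S,L)$ depends only on the class $[S,L]\in\omega_{2,1}$ and is $\QQ$-linear in it; that is, that $[S,L]\mapsto\log T(S,L)$ descends to a $\QQ$-linear map $\omega_{2,1}\otimes\QQ\to\fm$. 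Granting this, recall from \cite{Tz} (built on the algebraic cobordism theory of Levine--Morel and its extension by Levine--Pandharipande) that $\omega_{2,1}\otimes\QQ$ is four-dimensional and that the induced map $[S,L]\mapsto(L^2,LK_S,c_1(S)^2,c_2(S))$ is a $\QQ$-linear isomorphism onto $\QQ^4$; hence $\{L^2,LK_S,c_1(S)^2,c_2(S)\}$ is a basis of $(\omega_{2,1}\otimes\QQ)^\vee$, so there are unique $\ell_1,\dots,\ell_4\in\fm$ with
\begin{equation*}
\log T(S,L)=L^2\cdot\ell_1+LK_S\cdot\ell_2+c_1(S)^2\cdot\ell_3+c_2(S)\cdot\ell_4,
\end{equation*}
and setting $A_i=\exp(\ell_i)\in 1+\fm\subset\QQ[[x_\alpha]]^{\times}$ and exponentiating yields \eqref{eq:Ai}.

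To see that $\log T$ descends to such a $\QQ$-linear map I would verify three things: deformation invariance of each $d_\ua$, multiplicativity of $T$ under disjoint unions, and the double point degeneration formula. The first is straightforward: in a smooth projective family $\cX\to B$ with a line bundle, the relative Hilbert schemes of points, the relative tautological bundles, and the relative loci $S(\ua)$ — whose definition via the local analytic conditions of the previous section is preserved under deformation — form flat families over $B$, so the intersection numbers $d_\ua$ are locally constant. The second is elementary: for $S=S_1\sqcup S_2$ one has $S^{[n]}=\coprod_{n_1+n_2=n}S_1^{[n_1]}\times S_2^{[n_2]}$ with the tautological bundle restricting to $L_1^{[n_1]}\boxplus L_2^{[n_2]}$, while a subscheme parametrized by $S^0(\ua)$ splits according to which component supports each of its pieces, giving $S(\ua)=\coprod_{\ua'\sqcup\ua''=\ua}S(\ua')\times S(\ua'')$; by the Whitney sum formula and a comparison of dimensions only one K\"unneth bidegree survives each integral, so $d_\ua(S_1\sqcup S_2,L_1\sqcup L_2)=\sum_{\ua'\sqcup\ua''=\ua}d_{\ua'}(S_1,L_1)\,d_{\ua''}(S_2,L_2)$, i.e. $T(S_1\sqcup S_2,L_1\sqcup L_2)=T(S_1,L_1)\,T(S_2,L_2)$, and hence $\log T$ is additive under disjoint union.

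The real work, and the step I expect to be the main obstacle, is the degeneration formula, which must be obtained by adapting the relative theory of \cite{Tz} from nodal loci to the loci $S(\ua)$. Following \cite{Tz}, one introduces, for a surface $S$ together with a smooth divisor $D$, relative Hilbert schemes of points and relative analogues $S_D(\ua)$ of the loci $S(\ua)$, together with the bookkeeping of contact/tangency data along $D$; one then shows that when $(S,L)$ is degenerated to the normal-crossing union $(S_1\cup_D S_2,\ L_1\cup L_2)$, the cycle $d_\ua(S,L)$ breaks into a sum, over all distributions of the $l(\ua)$ prescribed singular points to the two components and over the intermediate contact data along $D$, of products of relative invariants on the two sides. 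Specialized to the Levine--Pandharipande double point relation $[S]=[S_1]+[S_2]-[\PP_D]$ (with $\PP_D$ a $\PP^1$-bundle over $D$ carrying the induced line bundle), this identity takes the multiplicative form $T(S,L)\cdot T(\PP_D)=T(S_1,L_1)\cdot T(S_2,L_2)$, i.e. the additivity of $\log T$ under the defining relations of $\omega_{2,1}$; combined with the previous paragraph this shows $\log T$ descends as required. The reason this machinery applies to the loci $S(\ua)$ at all is that they are cut out by purely local analytic conditions, inherited by the components of a degeneration exactly as in the deformation-invariance argument above; the technical heart is to check that the numerical degeneration formula closes up — no spurious boundary contributions, and the relative invariants match across $D$ — which requires re-running the dimension estimates of Proposition~\ref{prop:daL} and \cite{Tz} in the relative setting.
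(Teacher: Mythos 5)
Your proposal is correct and follows essentially the same route as the paper: establish that $T$ (equivalently $\log T$) respects the Levine--Pandharipande double point relations via a degeneration of the Hilbert schemes of points and of the loci $S(\ua)$ into relative invariants along $D$, then invoke the isomorphism $\omega_{2,1}\otimes\QQ\cong\QQ^4$ given by $(L^2, LK_S, c_1(S)^2, c_2(S))$ to produce the four series $A_i$. The only presentational differences are that you work additively with $\log T$ where the paper works multiplicatively, and that the paper makes explicit the step you compress — applying the relative degeneration formula to the family $\cX$ and to the three blowups of $X_i\times\PP^1$ along $D\times\{\infty\}$ in order to eliminate the relative terms and obtain $T(X_0,L_0)=T(X_1,L_1)T(X_2,L_2)/T(X_3,L_3)$.
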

\begin{proof}
The ingredients of the proof are  algebraic cobordism of pairs of smooth schemes and vector bundles, and degeneration of Quot schemes, see  \cite{LeeP}, \cite{LP} and \cite{LW}. We only need to use the  special case for pairs of  surfaces and line bundles and degeneration of Hilbert schemes of points. The theories in this  special case are summarized in  Sections 2 and 3 of \cite{Tz}. 

Let $[X_i,L_i]$ be pairs of surfaces and line bundles. Suppose 
$$[X_0, L_1]=[X_1, L_1]+[X_2, L_2]-[X_3, L_3]$$ is the double point relation obtained from a flat morphism $\pi: \cX\to \PP^1$ and a line bundle $\cL$ on $\cX$. That means $\cX$ is a smooth $3$-fold,  $X_0$ is the smooth fiber over $0$ and the fiber over $\infty$ is $X_1\cup X_2$, intersecting transversally along a smooth divisor $D$. 
Moreover,  $X_3= \PP(\cO_D\oplus N_{X_1/D})\cong \PP( N_{X_2/D}\oplus \cO_D)$ is a $\PP^1$ bundle over $D$, $L_i=\cL|_{X_i}$ for $i=0,1,2$ and $L_3$ is the pullback of $\cL|_D$ to $X_3$. 
The algebraic cobordism group of pairs of surfaces and line bundles $\omega_{2,1}$ is defined to be the formal sum of all pairs modulo double point relations.  The class of $[S,L]$ in $\omega_{2,1}$ is uniquely determined by all Chern numbers of $L$ and $S$; i.e. \fourtopand\ (\cite{LeeP}, \cite{Tz}). 

For every double point relation induced by $\pi: \cX\to \PP^1$, Li and Wu \cite{LW} proves that there exists a family of Hilbert schemes $\cXn$ over an open subset\footnote{We choose $0$, $\infty\in U \subseteq \PP^1$ to avoid possible singular fibers of $\cX\to \PP^1$ other than $\infty$.} $ U$ of  $\PP^1$ such that the fiber of $0\in U$ is $\Hn{X_0}$, and the fiber of $\infty$ is the union of products of relative Hilbert schemes
$$\bigcup_{k=0}^n \Hilb{(X_1/D)}{k}\times \Hilb{(X_2/D)}{n-k}.$$ 

The line bundle $\cL$ on $\cX$ induces a vector bundle $\Hilb{\cL}{n}$ on $\cXn$ by pulling $\cL$ back to the universal closed subscheme, and then pushing it forward to $\cXn$. The restriction of $\cLn$ on the fiber $\Hilb{X_0}{n}$ is $\Hilb{L_0}{n}$, and on $\Hilb{(X_1/D)}{k}\times \Hilb{(X_2/D)}{n-k}$ is $\Hilb{L_1}{k}\oplus\Hilb{L_2}{n-k}$. 

\begin{notation}$X_i(\ua)$, $(X_i/D)(\ua)$, $d_{\ua}(X_i, L_i)$ and  $d_{\ua}(X_i/D, L_i)$ are defined in a similar way to $S(\ua)$ and $d_{\ua}(S, L)$ on $(X_i, L_i)$ and $(X_i/D, L_i)$ . \end{notation}

\begin{claim}There is a family of closed subschemes $\cX(\ua) \subset \cX^{[N(\ua)]}$ such that 
\begin{align*} \cX(\ua) \cap X_0^{[N(\ua)]} &=X_0(\ua), \\
\cX(\ua)\cap \left({(X_1/D)^{[m]}\times (X_2/D)^{[N(\ua)-m]}}\right)
&= \bigcup (X_1/D)(\ua_1)\times (X_2/D)(\ua_2)  \end{align*}
where the sum is over all $\ua_1$ and $\ua_2$ satisfying $\ua=\ua_1\cup\ua_2$, 
$N(\ua_1)=m$, $N(\ua_2)=N(\ua)-m$.  
Furthermore, $\cX(\ua)$ is flat over $U$ via the composition  $\cX(\ua) \inclu \cX^{[N(\ua)]}\to U$.\end{claim}
\begin{proof}[proof of claim]
Let $\cX^0(\ua)$ be the union of all $\cX_t(\ua)$, for all smooth fibers $\cX_t$ over $t\in U$. Define $\cX(\ua)$ to be the closure of $\cX^0(\ua)$ in $\Hilb{\cX}{N(\ua)}$.  The properties of  $\cX(\ua)$ can be proved by using a similar argument as in (\cite{Tz}, lemma 3.8). 
\end{proof}

The restrictions of the flat $1$-cycle 
$\int_{\cX(\ua)} c_{N(\ua)-\text{codim}(\ua)}(\Hilb{L}{N(\ua)})$  on fibers over $0$ and $\infty$ are $0$-cycles of the same degree, which implies 
$$d_{\ua}(X_0, L_0)=\sum_{\ua=\ua_1\cup \ua_2} d_{\ua_1}(X_1/D, L_1)
d_{\ua_2}(X_2/D, L_2)\text{ and }$$
\begin{align}\label{eq:daL_rel}T(X_0, L_0)=T(X_1/D, L_1)T(X_2/D, L_2).
\end{align}
To derive a relation of generating series without relative series, we  apply \eqref{eq:daL_rel} to four families: $\cX$, 
the blowup of  $X_1\times \PP^1$ along $D\times\{\infty\}$, 
the blowup of  $X_2\times \PP^1$ along $D\times\{\infty\}$,  
the blowup of  $X_3\times \PP^1$ along $D\times\{\infty\}$, and multiplicity all equalities. Thus we can conclude  a double point relation 
$$[X_0, L_1]=[X_1, L_1]+[X_2, L_2]-[X_3, L_3]  \text { implies}$$
\begin{align}T(X_0, L_0)=\frac{T(X_1, L_1)T(X_2, L_2)}{T(X_3,L_3)},\end{align}
and thus $T$ induces a homomorphism from the algebraic cobordism group $\omega_{2,1}$ to $\CC[[x_\alpha]]$. Then \eqref{eq:Ai} is proved by using the fact that the  algebraic cobordism group $\omega_{2,1}$ is isomorphic to $\QQ^4$ by the morphism $[S,L]$ to $(\fourtop)$ (\cite{LeeP}, \cite{Tz}). 
\end{proof}

\begin{theo}\label{thm:univ} For every collection of isolated  singularity type  $\ua$, there exists a universal polynomial $T_{\ua}(x,y,z,t)$ of degree $l(\ua)$ with the following property: given a smooth projective surface $S$ and an $(N(\ua)+2)$-very ample  line bundle $L$ on $S$, a general $\codim(\ua)$-dimensional sublinear system of $|L|$ contains exactly $T_{\ua}(L^2, LK, c_1(S)^2, c_2(S))$ curves with singularity type precisely $\ua$.  
\end{theo}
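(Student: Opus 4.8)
The plan is to combine the two results that have already been established in the excerpt: Proposition~\ref{prop:daL}, which identifies the enumerative count of curves of type $\ua$ with the intersection number $d_{\ua}(S,L)$ whenever $L$ is $(N(\ua)+2)$-very ample, and Theorem~\ref{thm:Ai}, which shows that the generating series $T(S,L)=\sum_{\ua}d_{\ua}(S,L)x_{\ua}$ factors as $A_1^{L^2}A_2^{LK_S}A_3^{c_1(S)^2}A_4^{c_2(S)}$ for universal power series $A_i\in\QQ[[x_\alpha]]$. First I would extract, from the factorization \eqref{eq:Ai}, the coefficient of the monomial $x_{\ua}$: since each $A_i$ is a power series in the variables $x_\alpha$ with constant term $1$ (the constant term corresponds to $\ua=\emptyset$, where $d_{\emptyset}=1$), taking $\log$ gives $\log T(S,L)=L^2\log A_1+LK_S\log A_2+c_1(S)^2\log A_3+c_2(S)^2\log A_4$, and then exponentiating and reading off the coefficient of $x_{\ua}$ expresses $d_{\ua}(S,L)$ as a fixed polynomial expression in the four Chern numbers $L^2,LK_S,c_1(S)^2,c_2(S)$, with coefficients determined by the (universal) coefficients of the $A_i$. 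Call this polynomial $T_{\ua}(x,y,z,t)$; by construction it is universal, i.e.\ independent of $(S,L)$.

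Next I would verify the degree claim, namely $\deg T_{\ua}=l(\ua)$. The point is a weighting/grading argument: assign to each variable $x_\alpha$ weight $1$, so $x_{\ua}$ has weight $l(\ua)$, and assign to each Chern-number variable $x,y,z,t$ weight $1$ as well. One checks that each $A_i$, viewed with this grading, contributes in a way that makes $T(S,L)$ homogeneous-compatible: the coefficient of $x_{\ua}$ in $A_i$ involves singularity collections of length equal to the $x_\alpha$-degree, and the exponents $L^2$, $LK_S$, etc.\ are linear in the geometry, so the total degree in $(x,y,z,t)$ of the coefficient of $x_{\ua}$ is at most $l(\ua)$. That the degree is exactly $l(\ua)$ (not smaller) should follow from a leading-term computation: for $L$ very ample the dominant contribution to $d_{\ua}(S,L)$ comes from choosing the $l(\ua)$ singular points independently, each contributing a factor growing like a positive power of $L^2$, so the coefficient of $(L^2)^{l(\ua)}$, i.e.\ of $x^{l(\ua)}$, is a positive number (essentially $\prod_i$ of a positive constant depending only on $\alpha_i$, coming from the degree of $S(\alpha_i)$ against the appropriate Chern class). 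Finally I would record the enumerative conclusion: for $L$ that is $(N(\ua)+2)$-very ample, Proposition~\ref{prop:daL} says the general $\codim(\ua)$-dimensional linear subsystem of $|L|$ contains exactly $d_{\ua}(S,L)=T_{\ua}(L^2,LK,c_1(S)^2,c_2(S))$ curves of singularity type precisely $\ua$, which is the statement of the theorem.

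The main obstacle I expect is pinning down the exact degree $l(\ua)$ rather than merely an upper bound. The factorization \eqref{eq:Ai} immediately gives \emph{polynomiality} and an upper bound on the degree once one knows the $A_i$ are power series with unit constant term, but to show the degree is \emph{exactly} $l(\ua)$ one must exhibit a nonvanishing top-degree coefficient, and this requires a genuine (if elementary) geometric input: a dimension/positivity count showing that the cycle $S(\ua)$ meets the relevant Chern class of $L^{[N(\ua)]}$ in a way whose $L^2$-leading behavior is a nonzero product over the $\alpha_i$. This can be done by specializing to a convenient surface (e.g.\ $\PP^2$ or a product of curves) and a highly ample $L$, using the fact that $T(S,L)$ is multiplicative and that for $\ua$ of length one the leading term of $d_{\alpha}(S,L)$ in $L^2$ is governed by $\int_{S(\alpha)}$ of the top Chern class restricted to the generic fiber of the support map $S^0(\alpha)\to S$ — which is a strictly positive universal constant. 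Everything else (reading off the coefficient, the $\log$/$\exp$ bookkeeping, invoking Proposition~\ref{prop:daL} for the ampleness hypothesis) is routine.
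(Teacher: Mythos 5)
Your proposal is correct and follows essentially the same route as the paper: the paper's proof of Theorem \ref{thm:univ} is exactly the extraction of the coefficient of $x_{\ua}$ from the factorization \eqref{eq:Ai} (via binomial expansion of $A_1^{L^2}A_2^{LK_S}A_3^{c_1(S)^2}A_4^{c_2(S)}$, which is your $\log$/$\exp$ bookkeeping in different clothing), yielding a universal polynomial of degree $l(\ua)$ in the four Chern numbers, combined with Proposition \ref{prop:daL} to give it the enumerative meaning. Your extra care about why the degree is exactly $l(\ua)$ (positivity of the leading coefficient) goes beyond what the paper records, which simply asserts the degree, but it does not change the argument.
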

\begin{proof} We compare the coefficient of $x_{\ua}$ in \eqref{eq:Ai}. 
The coefficient of $x_{\ua}$ in $T(S,L)$ is $d_{\ua}(S, L)$;  on the right hand the coefficient of $x_{\ua}$ can be computed by binomial expansion (note $L^2$, $LK$, $c_1(S)^2$ and $c_2(S)$ are integers) and it is a polynomial of degree $l(\ua)$.  Therefore $d_{\ua}(S,L)$ is always a universal polynomial of \fourtopand. Moreover, Proposition \ref{prop:daL} implies that $d_{\ua}(S,L)$ is the universal polynomial $T_{\ua}(\fourtop)$ counting the number of curves with singularity type $\ua$. 
\end{proof}

Although we can prove the existence of universal polynomial $T_{\ua}$ by identifying it with $d_{\ua}(S,L)$, it is very difficult to compute the degree of $d_{\ua}(S,L)$ directly. Here are some cases where the universal polynomials are known:

\begin{itemize}

\item If  $\ua$ is $r$-nodes for any $r\in \NN$,  all coefficients of the universal polynomial can be  determined by combining any two of  \cite{BL}, \cite{CH} and \cite{Vakil}, or by \cite{KST}. The explicit formula for $r\leq 8$ was proved and listed in  \cite{KP} and \cite{Va}. 

\item Kleiman and Piene (\cite{KP})  also computed $T_{\ua}$ in many cases when the codimension is low, such as 
$\ua=(D_4, A_1)$, $(D_4, A_1, A_1)$, $(D_4, A_1, A_1)$, $(D_4, A_1, A_1, A_1)$, $(D_6)$, $(D_6, A_1)$ and  $(E_7)$. 

\item When  there is only one singular point, Kerner \cite{Ke} found an algorithm to enumerate the number of plane curves with one fixed topological type singularity, provided that the normal form is known. 

\item In general, let $N$ be a general $\PP^{\codim(\ua)}$ in  $|L|$ and $M$ be the singular locus of fibers of the universal curve over $N$. We believe the universal polynomial of $\ua$ should be equal to the Thom polynomial of the multisingularity $\ua$ applied to $M\to N$ (\cite{Kaz}, section 10). However, we do not know how to establish this fact because of the lack of proof of the Thom polynomials by algebro-geometric methods. 
\end{itemize} 

Another possible way to compute universal polynomials and the generating series is to write 
$$T(S,L)=\sum_{\ua} \frac{a_{\ua}(\fourtop)x_{\ua}}{\#Aut(\ua)}.$$
By Theorem \ref{thm:Ai}, it is easy to see that $a_{\ua}$'s are linear polynomials in \fourtopand\ for all $\ua$. 
These $a_{\ua}$  are computed in \cite{Kaz} and \cite{KP} for low codimensional $\ua$. 
Since every $a_{\ua}$ has only four terms and they determine $T_{\ua}$, it might be easier to compute $a_{\ua}$'s directly.  
When $\ua$ is $r$ nodes and $r\leq8$,  $a_{\ua}$ was realized as algebraic cycles in \cite{Qv}.

\section{Irreducibility of Severi strata of singular curves}\label{sec:irr}

The locus of reduced and irreducible degree $d$ plane curves with $r$ nodes is a locally closed subset in $|\cO(d)|$ on $\PP^2$. 
Its closure is called the Severi varieties and has been studies extensively. 
Especially, it is well known that the Severi varieties are irreducible for every $d$ and $r$ \cite{HaSeveri}.
In this section we will generalize the irreducibility theorem to   curves with  analytic singularities in the linear system of a sufficiently ample line bundle. 

\begin{theo} Let $\ua=\uan$ be a collection of analytic singularity types,  $S$ be a complex smooth projective surface and $L$ be an $(N(\ua)+2)$-very ample line bundle on $S$. Define $V_0^{d,\ua}(S,L)$ to be the locally closed subset of $|L|$  parametrizing curves with singularity type exactly $\ua$ in $|L|$,  and $V^{d,\ua}(S,L)$ to be the closure of $V_0^{d,\ua}(S,L)$. Then $V_0^{d,\ua}(S,L)$ is smooth, $V^{d,\ua}(S,L)$ is irreducible, and their codimensions in $|L|$ are both $\codim(\ua)=\sum_{i=1}^{l(\ua)} \tau(\alpha_i)$.  \end{theo}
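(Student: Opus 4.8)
The statement has three parts: smoothness of $V_0^{d,\ua}(S,L)$, its codimension, and irreducibility of $V^{d,\ua}(S,L)$. The first two I would extract essentially for free from the machinery already built in Section 2. Indeed, Proposition \ref{prop:daL} and its proof show (for $L$ which is $(N(\ua)+2)$-very ample) that the locus $W$ cut out on $S(\ua)$ is smooth and lies in $S^0(\ua)$, and the correspondence ``curve $\mapsto$ the subscheme $\coprod \eta_i$ it contains'' identifies $V_0^{d,\ua}(S,L)$ with the locus in $|L|$ whose defining section, restricted to some $\xi \in S^0(\ua)$, vanishes. So the first step is to set up this correspondence globally: consider the incidence variety $I = \{(C,\xi) \in |L| \times S^0(\ua) : \xi \subset C\}$, note that the projection $I \to S^0(\ua)$ is a projective subbundle (a $\PP$-bundle over $S^0(\ua)$, by $(N(\ua)+2)$-very ampleness the rank is constant) hence $I$ is smooth irreducible of dimension $\dim S^0(\ua) + \dim|L| - N(\ua) = \dim|L| - \codim(\ua)$ using the Lemma $\dim S^0(\ua) = N(\ua) - \codim(\ua)$; and the other projection $I \to |L|$ is injective onto $V_0^{d,\ua}(S,L)$ (a curve with singularity type exactly $\ua$ contains a unique such $\coprod\eta_i$, by the finite-determinacy argument in Proposition \ref{prop:daL}). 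This gives both smoothness and the codimension $\codim(\ua) = \sum \tau(\alpha_i)$ at one stroke, provided I also check that $I \to V_0^{d,\ua}$ is an isomorphism onto its image and not merely a bijection — for that I would argue that a deformation of $C$ keeping the singularity type $\ua$ deforms the distinguished subscheme $\xi$ flatly, so the inverse map is a morphism.

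**Irreducibility.** For irreducibility of $V^{d,\ua}(S,L) = \overline{V_0^{d,\ua}(S,L)}$ it suffices, by the above, to prove $S^0(\ua)$ is irreducible, since $I$ is then irreducible (a $\PP$-bundle over an irreducible base) and $V_0^{d,\ua}$ is its image. Now $S^0(\ua)$ fibers over the open part of $S^{(l(\ua))}$ (configurations of $l(\ua)$ distinct points of $S$, with the multiset structure matching repeated singularity types) with fiber over $(x_1,\dots,x_{l(\ua)})$ equal to $\prod_i (\text{fiber of } p \text{ over } x_i)$; the configuration space is irreducible, so it is enough to show that for a single analytic singularity type $\alpha$ the fiber $p^{-1}(x)$ — which by the proof of the Lemma is $orb(f_\alpha)/(\Cxy/\fm^{\bullet})^\times$, the $\cK$-orbit of a representative inside $\Cxy/\fm^{k(\alpha)+1}$ modulo units — is irreducible. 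But $orb(f_\alpha)$ is a single orbit of the connected algebraic group $\cK$ (its restriction to the finite-dimensional jet space $\Cxy/\fm^{k(\alpha)+1}$ is connected: it is built from $\mathrm{GL}_2$, unipotent automorphism groups, and the multiplicative group of units, all connected), hence irreducible, and the quotient by a connected group is irreducible. Thus $p^{-1}(x)$ is irreducible, $S^0(\ua)$ is irreducible, and so is $V^{d,\ua}(S,L)$.

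**The main obstacle.** The delicate point is the gluing of local pictures into the assertion that $S^0(\ua) \to S^{(l(\ua))}_{\circ}$ is a fiber bundle (or at least has irreducible total space) — one must know that the family of subschemes of shape $\xi_{\alpha_i}$ over a point $x_i$ varies algebraically and that the identification of the fiber with a $\cK$-orbit quotient is algebraic and locally trivial in $x_i$; this is where I would lean on the description in \cite{Gr} and the orbit computation already used in the proof of the Lemma, checking that it globalizes over $S$ (it does, since everything is intrinsic to the formal neighborhood of $x_i$ and $S$ is smooth). A secondary subtlety is confirming that passing from $V_0^{d,\ua}$ to its closure does not introduce new components: this is automatic since the closure of an irreducible set is irreducible, but one should make sure $V_0^{d,\ua}$ is genuinely the image of the irreducible $I$ and not a union indexed by some discrete choice — here again the uniqueness of the distinguished subscheme attached to a curve of type exactly $\ua$, established in Proposition \ref{prop:daL}, is what is needed. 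The restriction to \emph{analytic} singularity types (rather than topological) enters precisely because for topological types the fiber of $p$ acquires the extra equisingular directions and the clean orbit description, hence the clean codimension $\sum\tau(\alpha_i)$, would have to be replaced by $\sum\codim(\alpha_i)$ with a more careful irreducibility analysis of the equisingular stratum.
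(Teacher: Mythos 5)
Your incidence variety $I$ is exactly the paper's $\Sigma_0$, and the projective-bundle-over-$S^0(\ua)$ structure is the right engine for irreducibility; your supplementary argument that $S^0(\ua)$ is irreducible (fibering over the configuration space with fibers that are orbits of a connected group) is a reasonable expansion of what the paper simply cites from the literature. But there is a genuine gap at the pivot of your irreducibility argument: you assert that the second projection $I\to|L|$ has image $V_0^{d,\ua}(S,L)$. It does not. The image is the set of \emph{all} curves containing some $\xi\in S^0(\ua)$, and this is strictly larger in general: for instance, any curve with an ordinary triple point at $p$ has local equation $f\in\fm^3\subset\langle g,\fm^3\rangle$ for \emph{every} nodal germ $g$, so it lies in the image of $I$ for $\ua=(\text{node})$ without being nodal (and the fiber of $I$ over it is positive-dimensional). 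A constructible proper subset of an irreducible set need not have irreducible closure, so irreducibility of $I$ alone proves nothing about $\overline{V_0^{d,\ua}}$. What is needed — and what the paper supplies — is the dimension count showing that the excess locus $\mathrm{Im}(\Sigma_0)\setminus V_0^{d,\ua}(S,L)$, consisting of curves containing a $\xi\in S^0(\ua)$ but of wrong type, is forced to contain subschemes from the auxiliary loci $\tilde S^0(\ua)\subset\Hilb{S}{N(\ua)+1}$ or $S'(\ua)\subset\Hilb{S}{N(\ua)+3}$ and hence has strictly smaller dimension; only then is $V_0^{d,\ua}(S,L)$ dense in $\mathrm{Im}(\Sigma_0)$ and its closure equal to the irreducible $\overline{\mathrm{Im}(\Sigma_0)}$. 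This step cannot be skipped.

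For smoothness and the codimension you take a route genuinely different from the paper's, and it is also incomplete. You want $V_0^{d,\ua}(S,L)$ to inherit smoothness from $I$ via a bijection, but a bijective morphism onto a possibly non-normal target need not be an isomorphism, and normality of $V_0^{d,\ua}$ is essentially what is at stake; your proposed fix (flatness of the family of distinguished subschemes giving an inverse morphism) is plausible but not carried out, and it anyway only makes sense after the density issue above is settled so that the relevant locus in $I$ is open. The paper avoids all of this: it shows that $(N(\ua)+2)$-very ampleness forces the tangent map $H^0(L)/\langle s\rangle\to H^0(L\otimes\cO_C/J)$ to be surjective (since $\sum\tau(\alpha_i)\le N(\ua)<N(\ua)+3$), so the classifying map from the germ of $|L|$ to the miniversal deformation space of the multigerm is a submersion, and smoothness together with $\codim(\ua)=\sum\tau(\alpha_i)$ follows by pulling back the smooth equianalytic stratum. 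I would recommend adopting that versality argument, or else fully justifying the isomorphism $I_0\cong V_0^{d,\ua}(S,L)$.
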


\begin{proof}
Let $\cC$  be the universal family of curves in $|L|$ with projection $\cC \to |L|$,  and let $\cC^{[n]}$ be the relative Hilbert scheme of $n$ points of the family. 
There are  natural projections from $\cC^{[n]}$ to $\Hn{S}$ and from $\cC^{[n]}$ to $|L|$. 

Consider the following commutative diagram: 

\begin{center}        
$$\xymatrix{
|L| &  \cC^{[N(\ua)]} \ar[l] \ar[r]& \Hilb{S}{N(\ua)}\\
V^{d,\ua}(S,L)\ar[u]& \Sigma_0 \ar[lu] \ar[r] \ar[u]&   S^0(\ua) \ar[u]&
}$$
\end{center}

The right part of the diagram is Cartesian; i.e. $\Sigma_0$ is defined to be $\cC^{[N(\ua)]}\underset{\Hilb{S}{N(\ua)}}{\times} S^0(\ua)$ and  therefore is the union of $\{\xi \times C\,|\,  \xi \in S^0(\ua), \, C \text{ is a curve in } |L|,\,\xi\subset C\}$. 

If $L$ is $(N(\ua)-1)$-very ample, fibers of $\cC^{[N(\ua)]} \to \Hilb{S}{N(\ua)}$ are all projective spaces of  constant dimension, so $\Sigma_0 \to S^0(\ua)$ is a projective bundle. 
Because  $S^0(\ua)$ is smooth and connected \cite{Gr},  $\Sigma_0$ is also smooth connected and thus irreducible.

The image Im($\Sigma_0$)$\subset |L|$ is the collection of curves which contain closed subschemes in $S^0(\ua)$. 
If a curve has singularity type $\ua$, it belongs to  Im($\Sigma_0$), which implies $V_0^{d,\ua}(S,L)\subset \text{Im}(\Sigma_0) $ and  $V^{d,\ua}(S,L)\subset \overline{\text{Im}(\Sigma_0)} $.
The complement of  $V_0^{d,\ua}(S,L)$ in  $\text{Im}(\Sigma_0)$ are curves that contain closed subschemes in $S^0(\ua)$ but do not have singularity type $\ua$. 
In the proof of Proposition \ref{prop:daL}, we showed that these curve must contain closed subschemes in 
\begin{align*}
\tilde{S}^0(\ua)&= \left\{\displaystyle \Cxy/\langle g_1\fm, \fm^{k(\alpha_1)+1}\rangle
\cup  \left(\coprod_{i=2}^{l(\ua)}\eta_i \right)  \right\}\subset \Hilb{S}{N(\ua)+1}, \, \\
S'(\ua)&=\left\{\displaystyle \coprod_{i=1}^{l(\ua)+1} \eta_i  \,\left|\,\,
\coprod_{i=1}^{l(\ua)} \eta_i \in S^0(\ua) \text{ and } \eta_{l(\ua)+1}\cong
\Cxy/\fm^2 \right. \right\}\subset \Hilb{S}{N(\ua)+3},  
\end{align*}
or finite subsets of $\Hilb{S}{N(\ua)+1}$ similar to $\tilde{S}^0(\ua)$ if the singularity at $x_i$ is not $\alpha_i$. 
Simple calculation shows that the complement of $V_0^{d,\ua}(S,L)$ in  $\text{Im}(\Sigma_0)$ has dimension strictly less than $ \dim V_0^{d,\ua}(S,L)$. It follows that  $V_0^{d,\ua}(S,L)$ is dense in $\text{Im}(\Sigma_0)$ and its closure $V^{d,\ua}(S,L)$ is dense in $\overline{\text{Im}(\Sigma_0)}$. 
Since Im($\Sigma_0$) is irreducible, so is $\overline{\text{Im}(\Sigma_0)}$, therefore $V^{d,\ua}(S,L)$ must be the only irreducible component of $\overline{\text{Im}(\Sigma_0)}$. 
We conclude that $V^{d,\ua}(S,L)=\overline{\text{Im}(\Sigma_0)}$ is irreducible.

Next, we prove $V_0^{d,\ua}(S,L)$ is smooth and of codimension $\codim(\ua)$. 
Let $s\in |L|$ define a curve $C$ with singularity type $\ua$.  The germ of $|L|$ at $s$ maps to the miniversal deformation space $\mathsf{Def}$ of the singularities of $C$. The corresponding map of tangent spaces  $T_s|L|=H^0(L)/\langle s\rangle \to H^0(L\otimes \cO_C/J)$ is onto if $h^0(L\otimes \cO_C/J)=\sum_{i=1}^{l(\ua)} \tau(\alpha_i)\leq N(\ua)+3$, because $L$ is $(N(\ua+2)$-very ample. 
For every $\alpha \in \ua$, let $f_{\alpha}$ be a germ that defines $\alpha$ at the origin.  Since $f_{\alpha}$ is (analytically) $k(\alpha)$-determined, $ \fm^{k(\alpha)+1} \subseteq \fm J(f_{\alpha}) + \langle f_{\alpha} \rangle$ (Theorem \ref{thm:GLS}).  Therefore,

\begin{align*}\tau(\alpha)=&\dim_{\CC} \Cxy/\langle f_{\alpha},  J(f_{\alpha}) \rangle \leq 
\dim_{\CC} \Cxy/\langle f_{\alpha},  \fm J(f_{\alpha}) \rangle \\ \leq &
\dim_{\CC} \Cxy/\langle f_{\alpha}, \fm^{k(\alpha)+1} \rangle= N(\ua)<N(\ua)+3. \end{align*}

Therefore the map to miniversal deformation space is a smooth map and thus $V_0^{d,\ua}(S,L)$  is smooth and of codimension $\codim(\ua)=\sum_{i=1}^{l(\ua)} \tau(\alpha_i)$ in $|L|$.
\end{proof}

\bibliography{../../../mybib}
\bibliographystyle{amsplain}

\end{document}